\newtheorem{theorem}{Theorem}
\newtheorem{lemma}[theorem]{Lemma}
\newtheorem{conjecture}[theorem]{Conjecture}
\theoremstyle{definition}
\newtheorem*{acknowledgement}{Acknowledgement}
\title{Minimal orbifolds and (a)symmetry of piecewise locally symmetric manifolds}
\author{T. T$\hat{\mathrm{a}}$m Nguy$\tilde{\hat{\mathrm{e}}}$n Phan}
\DeclareMathOperator{\Vol}{Vol}
\DeclareMathOperator{\Ker}{Ker}
\DeclareMathOperator{\Isom}{Isom}
\DeclareMathOperator{\Out}{Out}
\DeclareMathOperator{\SL}{SL}
\DeclareMathOperator{\SO}{SO}
\DeclareMathOperator{\CAT}{CAT}
\DeclareMathOperator{\Fix}{Fix}
\def\R{\mathbb{R}}
\def\Z{\mathbb{Z}}
\def\Q{\mathbb{Q}}
\def\X{\overline{X}}
\def\M{\overline{M}}
\begin{document}
\begin{abstract}
We show that if $g$ is a Riemannian metric on a closed piecewise locally symmetric manifold $M$, then the lift of $g$ to the universal cover $\widetilde{M}$ has  a discrete isometry group. We also show that the index $[\Isom(\widetilde{M}): \pi_1(M)]$ is bounded by a constant independent of $g$.
\end{abstract}
\maketitle
\section{Introduction}

Let $M^n$ be a smooth manifold of dimension $n$, and let $g$ be a Riemannian metric on $M$.  Let $\widetilde{g}$ be the lifted metric on the universal cover $\widetilde{M}$. To study the amount of symmetry of $M$, one traditionally studies the group $\Isom(M,g)$ of isometries of $(M,g)$. While nonpositively curved, locally symmetric spaces of noncompact type have discrete isometry groups, they are ``highly symmetric", i.e. their universal covers are symmetric manifolds and have continuous isometry groups. With this perspective on symmetry of a Riemannian manifold, a more interesting way of measuring the symmetry of $M$ is the group $\Isom(\widetilde{M},\widetilde{g})$, for it contains symmetries that are ``hidden" on covers of $M$.  

The group $\pi_1(M)$ of deck transformations on $\widetilde{M}$ is a subgroup of $\Isom((\widetilde{M}), \widetilde{g})$. A natural question to ask is how large can the index $[\Isom(\widetilde{M}): \pi_1(M)]$ be. The following phenomenon is interesting.  

If $M$ is diffeomorphic to a locally symmetric manifold, and $g$ is some Riemannian metric, then there is a constant $C(\pi_1(M))$ such that if $[\Isom(\widetilde{M}): \pi_1(M)] > C$, then $g$ is scalar multiple of the locally symmetric metric, in which case $[\Isom(\widetilde{M}: \pi_1(M)] = \infty$ and $\Isom(\widetilde{M})$ is a Lie group of positive dimension. The case where $M$ is closed is due to Farb and Weinberger (\cite{FW2}), and the case where $M$ is noncompact is due to Avramidi (\cite{Avramidistucktori}).  

The mechanism for such a finite threshold for non-discreteness  of $\Isom(M)$ is the existence of minimal orbifolds covered by $M$. More precisely, the question of existence of such minimal orbifolds can be formulated as follows. An orbifold $V$ that is (orbifold) covered by $M$ is a \emph{minimal orbifold} if it does not cover any other orbifold. One can ask which manifold $M$ has a minimal orbifold?     

The existence of such minimal orbifolds for locally symmetric manifolds is proved by Farb and Weinberger (\cite{FW2}) by combining Mostow-Prasad-Margulis rigidity, a theorem of Kazhdan and Margulis, and a theorem of Borel (without Margulis). This does not hold, however, for general locally homogeneous manifolds. For example, compact nilmanifolds (or more generally circle bundles) do not have minimal orbifolds. It will be interesting to find a proof of the existence of minimal orbifolds that does not depend on the above three ingredients. Another class of spaces that admit minimal orbifolds is moduli spaces of curves by results of Avramidi(\cite{AvramidiL2}). It is not known that there other any other manifolds that admit mininal orbifolds.

In this paper, we prove that closed \emph{piecewise locally symmetric manifolds} admit minimal orbifolds. These manifolds are aspherical and were introduced in \cite{Tampwlocsym}. We will recall the definition of these manifolds in the next section. As the name suggests, piecewise locally symmetric manifolds decompose into pieces that are diffeomorphic to locally symmetric manifolds.


The main theorem of this paper is the following.
\begin{theorem}\label{magic number}
Let $M$ be a closed piecewise locally symmetric manifold. There exists a constant $C$ depending only on $\pi_1(M)$ such that for any metric $h$ on $M$, we have \[ [\Isom(\widetilde{M}): \pi_1(M)] \leq C.\] 
The constant $C$ can be taken to be $\Vol(M)/V_n$, where $V_n$ is the constant in the Kazhdan-Margulis theorem for the symmetric space corresponding to the pieces of $M$.  
\end{theorem}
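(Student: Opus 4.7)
The plan is to adapt the Farb--Weinberger argument for closed locally symmetric manifolds \cite{FW2} to the piecewise setting. Set $G := \Isom(\widetilde{M}, \widetilde{h})$, which contains $\pi_1(M)$ as a subgroup acting cocompactly on $\widetilde{M}$. The strategy is to realize the index $[G:\pi_1(M)]$ as the degree of an orbifold cover $M \to V$ by a piecewise locally symmetric orbifold $V$, and then bound $\Vol(V)$ from below by Kazhdan--Margulis applied piecewise.

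First I would establish that $G$ acts properly and $[G:\pi_1(M)] < \infty$. The key input is that the piecewise locally symmetric structure on $M$ is determined topologically, as shown in \cite{Tampwlocsym}; consequently, any element of $G$ must permute the lifts to $\widetilde{M}$ of the pieces of $M$. Since $M$ has finitely many pieces and $\pi_1(M)$ already acts cocompactly on each lifted piece, $G/\pi_1(M)$ must be finite, and the quotient $V := \widetilde{M}/G$ is a compact orbifold orbifold-covered by $M$ with degree $d := [G:\pi_1(M)]$.

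The piece decomposition of $M$ descends to a decomposition of $V$ into orbifold pieces $V_1, \ldots, V_\ell$. The heart of the argument is to show that each $V_j$ is a locally symmetric orbifold of the same type as the corresponding piece of $M$. For this I would apply Mostow--Prasad--Margulis rigidity piecewise: the restriction of each element of $G$ to a piece of $\widetilde{M}$ is a $\widetilde{h}$-isometry of that piece that normalizes the deck group of the corresponding piece of $M$. Rigidity then forces this diffeomorphism to agree with an isometry of the \emph{canonical} locally symmetric metric on the piece (not of $h$), and these piecewise isometric actions combine to give $V$ a canonical piecewise locally symmetric orbifold structure. Applying Kazhdan--Margulis to each $V_j$ yields $\Vol(V_j) \geq V_n$ with respect to this canonical metric, so $\Vol(V) \geq V_n$. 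Since the degree-$d$ orbifold cover $M \to V$ multiplies canonical volumes, $\Vol(M) = d \cdot \Vol(V) \geq d \cdot V_n$, which is the desired bound.

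The main obstacle will be the piecewise application of rigidity. The pieces are in general noncompact finite-volume locally symmetric manifolds with specific cusp or boundary structure that must match across adjacent pieces; one needs to verify that an element of $G$, which a priori is only an $\widetilde{h}$-isometry, respects this cusp/boundary structure sufficiently well for Prasad's version of Mostow rigidity to apply to each piece, and that the resulting isometric actions on individual pieces glue into a genuine piecewise locally symmetric orbifold structure on $V$. A secondary subtlety is the bookkeeping of degrees: when $G$ permutes lifts of pieces nontrivially, one must check that the degrees of the induced covers between corresponding pieces combine correctly with stabilizer indices so that the equality $\Vol(M) = d \cdot \Vol(V)$ really holds after summing canonical volumes over pieces.
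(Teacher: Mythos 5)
Your overall strategy (discreteness of $\Isom(\widetilde{M})$ $\Rightarrow$ compact orbifold quotient, then realize symmetries locally symmetrically and apply Kazhdan--Margulis) matches the paper's, but there is a genuine gap at the step where you pass from $\widetilde{h}$-isometries to the piecewise locally symmetric structure.

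You assert that since the piecewise locally symmetric structure is ``determined topologically,'' any element of $G = \Isom(\widetilde{M},\widetilde{h})$ must permute the lifted pieces, so that the quotient orbifold $V = \widetilde{M}/G$ inherits a piece decomposition. This does not follow. What the rigidity result (Theorem~\ref{piece rigidity}) gives is that a \emph{homotopy equivalence} of $M$ is \emph{homotopic} to one that preserves the pieces; it does not say that a given $\widetilde{h}$-isometry preserves the actual lifted pieces set-theoretically, and for a generic metric $h$ it will not. Consequently the decomposition of $V$ into $V_1,\ldots,V_\ell$ is not well defined as you set it up, and you cannot literally apply Mostow--Prasad rigidity to ``the restriction of each element of $G$ to a piece.'' The paper sidesteps this by never quotienting $\widetilde{M}$ by $G$ directly for the volume count: it passes to a finite cover $\widehat{M}$ that is normal over $\widetilde{M}/\Isom(\widetilde{M})$, uses Lemma~\ref{most symmetry} (via Borel--Conner--Raymond) to embed the deck group $F$ into $\Out(\pi_1(\widehat{M}))$, and then realizes \emph{that} outer action by honest isometries of the \emph{locally symmetric} metric $g_{loc}$ on the disjoint union of the pieces. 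The volume computation is carried out entirely in the $g_{loc}$ world, not with $h$.

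A second gap, which you flag as a ``secondary subtlety'' but which actually requires real work, is effectiveness of the realized action. Once you move to the $g_{loc}$-isometric action on the pieces, you must rule out that some nontrivial $f \in F$ acts trivially on a piece (otherwise the $g_{loc}$-orbifold quotient would have smaller degree than $d$, and the inequality $\Vol(M,g_{loc}) \geq d\,V_n$ would fail). The paper needs two separate arguments here: that $H \leq \Out(\pi_1(\widehat{M}))$ contains no nontrivial twists because $F$ is finite, and Lemma~\ref{finite order isom}, which shows that a finite-order isometry of a noncompact finite-volume locally symmetric space cannot restrict to (something homotopic to) the identity on a lowest-dimensional boundary stratum. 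Without these, the equality you need between the covering degree $d$ (measured in $h$) and the $g_{loc}$-volume ratio is unjustified. So the proposal is not a complete proof; it correctly identifies the route but leaves the two load-bearing lemmas of the paper unproved and, in the first case, sets up the construction in a way that cannot be directly repaired.
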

Geometrically, the above theorem says two things. Firstly, closed piecewise locally symmetric manifolds have at most a discrete amount of symmetry, i.e. $\Isom(\widetilde{M})$ is discrete. In particular, closed piecewise locally symmetric manifolds do not admit any locally homogeneous metric. Secondly, the degree of symmetry of a given closed piecewise locally symmetric manifold is universally bounded independently from the metric. This is because piecewise locally symmetric manifolds admit minimal orbifolds.

All of the above examples are special cases of a conjecture of Farb and Weinberger (\cite[Conjecture 1.6]{FW2}).
\begin{conjecture}[Farb and Weinberger]\label{magic number conj}
Let $M$ be any closed Riemannian $n$--manifold, $n > 1$. There exists a constant $C$ depending only on $\pi_1(M)$ such that the following are equivalent:

1. $M$ is aspherical, smoothly irreducible (i.e. $M$ is not finitely covered by a product), $\pi_1(M)$ has no non-trivial, normal abelian subgroup and $[\Isom(\widetilde{M}) \colon \pi_1(M)] > C$.

2. $M$ is isometric to an irreducible, locally symmetric Riemannian manifold of non-positive sectional curvature.
\end{conjecture}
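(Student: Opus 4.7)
The plan is to produce a minimal orbifold $V$ covered by $M$ by quotienting $\widetilde{M}$ by the full isometry group $G := \Isom(\widetilde{M}, \widetilde{h})$, and then bound $\Vol(V)$ from below using the Kazhdan--Margulis theorem. If $V$ turns out to be compact and to admit a canonical piecewise locally symmetric metric with $\Vol(V) \geq V_n$, then the desired bound
\[
[G : \pi_1(M)] = \frac{\Vol(M)}{\Vol(V)} \leq \frac{\Vol(M)}{V_n}
\]
follows, with all volumes computed in the canonical piecewise locally symmetric metric on $M$ and $V$. The canonical volume of $M$ depends only on $\pi_1(M)$ by (piecewise) Mostow rigidity, so the bound is genuinely independent of $h$.

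My first step would be to show $G$ is discrete. By Myers--Steenrod, $G$ is a Lie group acting smoothly on $\widetilde{M}$. If the identity component $G_0$ were nontrivial, then after passing to a finite-index subgroup that normalizes $\pi_1(M)$, the group $G_0$ would descend to a nontrivial continuous action of a connected Lie group on the closed aspherical manifold $M$. A standard argument (going back to Conner--Raymond) then produces a nontrivial central element of $\pi_1(M)$, contradicting the centerlessness of $\pi_1(M)$ for piecewise locally symmetric manifolds, which I would take as input from \cite{Tampwlocsym}. Hence $G$ is discrete, and since $\pi_1(M)$ is cocompact, the index $[G : \pi_1(M)]$ is finite and $V = \widetilde{M}/G$ is a compact orbifold.

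The crux is to equip $V$ with a canonical piecewise locally symmetric metric, independently of $h$. The piecewise decomposition of $\widetilde{M}$ is canonical up to $\pi_1(M)$-equivariant isotopy (again from \cite{Tampwlocsym}), so $G$ must permute the pieces and the decomposition descends to a decomposition of $V$ into pieces $V_j$, each a finite orbifold quotient of some locally symmetric piece $M_i$. By Mostow--Prasad--Margulis rigidity applied piecewise, each $V_j$ carries a canonical locally symmetric orbifold metric, and these glue to a canonical piecewise locally symmetric metric on $V$; Kazhdan--Margulis applied to any one piece then yields $\Vol(V) \geq V_n$. The hard part I expect will be this last construction: one must check that the canonical metrics on neighboring pieces match along their common boundary (a condition built into the definition of piecewise locally symmetric structure), and that the piecewise decomposition is genuinely $G$-invariant rather than only $\pi_1(M)$-invariant. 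Since canonicity holds only up to isotopy, one is forced to work with a $G$-equivariant refinement of the decomposition, and any loss of index incurred in this refinement must be absorbed into the final estimate.
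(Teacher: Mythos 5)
The statement you were handed is Conjecture~\ref{magic number conj}, the Farb--Weinberger conjecture, which the paper does \emph{not} prove: it remains open, and is stated for arbitrary closed aspherical $n$-manifolds with no assumption that $M$ be piecewise locally symmetric. Your proposal immediately restricts to piecewise locally symmetric $M$, so it is not addressing that statement at all; what you have sketched is Theorem~\ref{magic number}, the paper's main result, which is the special case of the conjecture for piecewise locally symmetric manifolds. I will review it on that basis.

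Your plan and the paper's agree at the top level (discreteness, minimal orbifold quotient, Kazhdan--Margulis), but the crucial step diverges and contains a genuine gap that you yourself flag. You want to equip $V=\widetilde{M}/\Isom(\widetilde{M})$ with a canonical piecewise locally symmetric metric and compare volumes, and you worry that the Mostow-rigid metrics on neighbouring pieces may not match along boundaries. That worry is fatal: there is no reason the locally symmetric metrics on adjacent truncated pieces should agree along a common boundary stratum, and the paper never attempts this gluing. Instead it passes to a finite cover $\widehat{M}\to M$ that is a \emph{normal} cover of $V$ with deck group $F$, embeds $F$ into $\Out(\pi_1(\widehat{M}))$ via Lemma~\ref{most symmetry}, uses piece rigidity (Theorem~\ref{piece rigidity}) together with Mostow--Prasad rigidity to realize $F$ by isometries of the disjoint union $\widehat{\mathcal{M}}$ of the \emph{complete} (cusps restored, unglued) locally symmetric spaces underlying the pieces, and runs the volume comparison on $\widehat{\mathcal{M}}/F$ rather than on $V$. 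This sidesteps gluing entirely, and explains why the constant is $\Vol(\mathcal{M},g_{loc})/V_n$ with cusp volume included, not a volume of the glued manifold $M$. Your proposal also omits the effectiveness check: without Lemma~\ref{finite order isom} ruling out a nontrivial element of $F$ acting trivially on some piece, the identity $\Vol(\widehat{\mathcal{M}})/\Vol(\mathcal{W})=|F|$ can fail and the Kazhdan--Margulis bound does not transfer. Finally, your discreteness sketch understates the difficulty: a connected subgroup $G_0\leq\Isom(\widetilde{M},\widetilde{h})$ need not normalize $\pi_1(M)$, and passing to a finite-index subgroup is useless for a connected group; the paper instead verifies the hypotheses of \cite[Theorem 1.3]{FW2} (asphericity, smooth irreducibility, no normal abelian subgroup) directly from the complex-of-groups structure, and then kills the resulting locally symmetric possibility with the Margulis normal subgroup theorem via the kernel of the folding retraction onto a chamber.
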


\begin{acknowledgement} I would like to thank my advisor, Benson Farb, for suggesting this problem and for his guidance. I would like to thank Grigori Avramidi, Ben McReynolds and Shmuel Weinberger for helpful conversations.
\end{acknowledgement}

\section{Piecewise locally symmetric manifolds: backgrounds and definition}

\label{sec:definitions}

We briefly recall the definition of \emph{piecewise locally symmetric manifolds} and state some of their properties, which we will use later to prove the main theorem. For more detailed exposition, the readers can see \cite{Tampwlocsym}.

\subsection{Compactifications of locally symmetric spaces}

Let $X$ be a connected, noncompact, finite-volume, complete, locally symmetric, nonpositively curved manifold. Then $X$ has a compactification $\overline{X}$ that is a manifold with corners as follows.

If $X$ has $\R$-rank equal to $1$, then $\overline{X}$ can be taken to be $X$ with deleted ends at a ``horo cross section" (see \cite{Tamrigidity}). Each end of $N$ is the quotient of a horoball in $\widetilde{X}$ by a group of parabolic isometries. Diffeomorphically, these are $S\times [0,\infty)$, for $S$ a compact infra-nilmanifold that is a quotient of a horosphere in $\widetilde{X}$.

If $X$ has $\R$-rank $\geq 2$, then by the Margulis arithmeticity theorem (\cite{Zimmer}), $X = K \backslash G/\Gamma$, for some semisimple, linear, connected algebraic group defined over the rational numbers $\Q$ with a maximal compact subgroup $K$, and $\Gamma$ is an arithmetic lattice of $G$. We take $\overline{X}$ to be the Borel-Serre compactification of $X$ (\cite{Ji}).

\subsection{Piecewise locally symmetric spaces}

Let $\overline{X}$ be as above. Let $X_i$, for $i \in S$, be the codimension $1$ strata of $\X$. For each $x \in \X$, let
\[ S(x) = \{ i \in S \: | \: x \in X_i\}.\]
Let $W$ be a Coxeter group whose generators are in one-to-one correspondence with the codimension $1$ strata of $\overline{X}$. With abused notation, we write $S = \{s_i\}$ be the set of generators of $W$. We require that for each subset $T \subset S$, the subgroup generated by elements in $T$ is finite if and only if the corresponding strata have nonempty intersection. We define $W_T$ to be the subgroup of $W$ that is generated by all $s\in T$.

Define and equivalence relation on $W\times \X$ as $(h,x)\sim(g,y)$ if $x=y$ and $h^{-1}g \in W_{S(x)}$. Then $U(W,\X)$ (defined in \cite{Davis}) is the quotient space $(W\times \X) /\sim$ and is a manifold. The group $W$ naturally acts on $U(W,\X)$ with quotient $\X$, and this action is proper. We call manifolds that are quotients of $U(W,\X)$ by subgroups of $W$ \emph{piecewise locally symmetric spaces}. These manifolds need not be compact, but one can obtain always compact such manifold, e.g. $U(W,\X)/\Gamma$, for some torsion free finite index subgroup $\Gamma$ of $W$. The existence of such $\Gamma$ is guaranteed since $W$ is linear.  

\bigskip
\noindent
\textbf{Examples:}
\begin{itemize}
\item[1)] Let $X$ be a noncompact, finite volume, hyperbolic manifold. Then each ends of $X$ is $S\times[0,\infty)$ for some compact flat manifold $S$. Let $\overline{X}$ be the manifold obtained by deleting $S\times (1,\infty)$. Let $M$ be a double of $X$. Then $M$ is a piecewise locally symmetric manifold. In this case, $W = \Z_2$.
\item[2)] Let $\Gamma$ be and torsion free finite index subgroup of  $\SL(3,\Z)$, and let $X = \SO(3)\backslash\SL(3,\R)/\Gamma$. The compactification $\X$ is a manifold with corners with codimension $1$ and codimension $2$ strata as below. See also Figure~\ref{SL(3,Z)} for an illustrating picture of $\M$.
\begin{figure}\label{SL(3,Z)}
\begin{center}
\includegraphics[height=100mm]{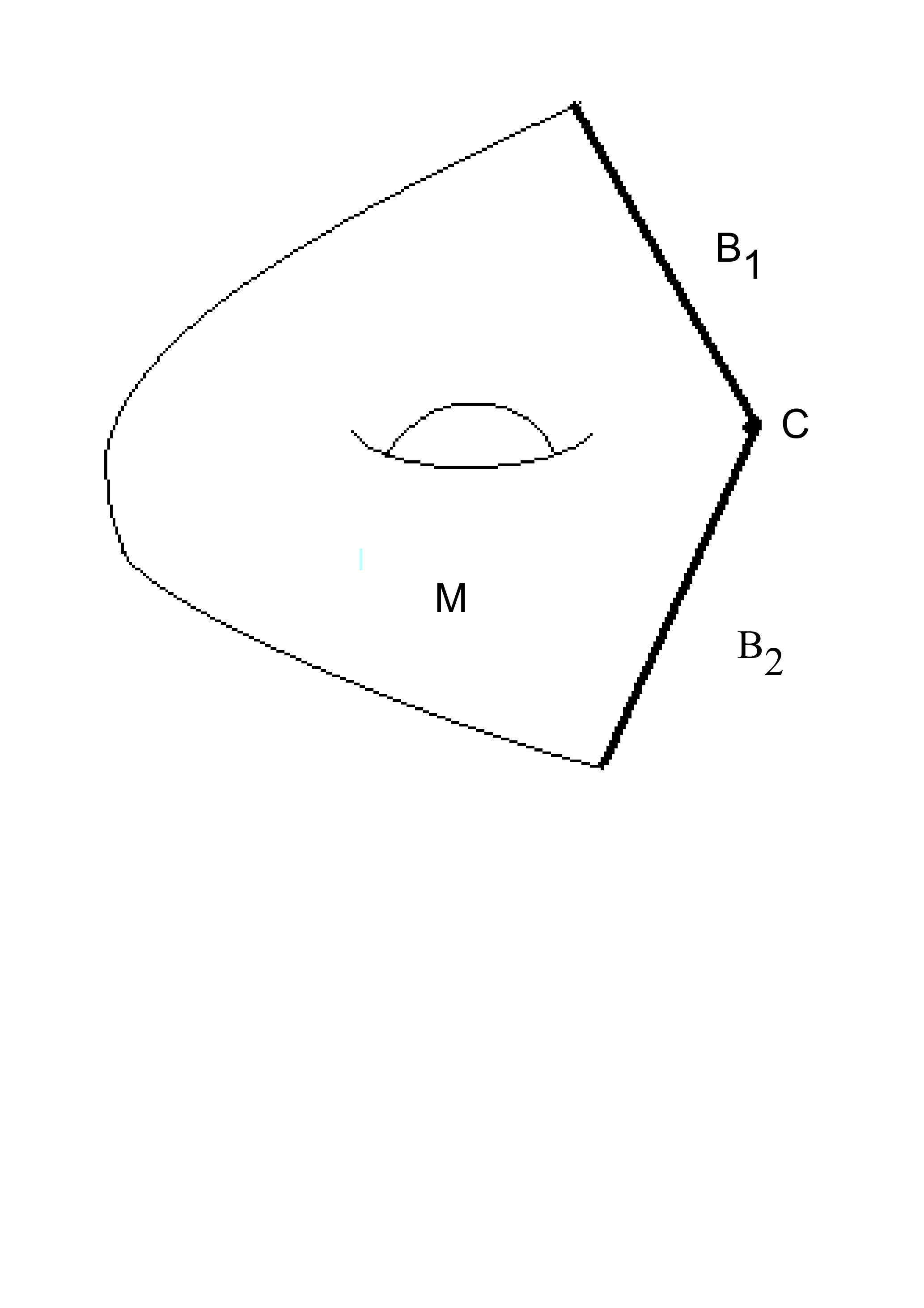}
\caption{Schematic of a manifold with corners with two codimension $1$ strata and one codimension $2$ stratum.}
\end{center}
\end{figure}
  
Recall that the parabolic subgroups of $\SL(n,\R)$ are precisely the subgroups that preserves a flag. There are two kinds of codimension $1$ strata, which correspond respectively to the $\Gamma$-conjugacy classes of the following two parabolic subgroups of $\SL(3,\R)$.
\[ P_1 =\left(\begin{matrix} *&*&*\\ *&*&*\\ 0&0&* \end{matrix}\right), \qquad P_2 =\left(\begin{matrix} *&*&*\\ 0&*&*\\ 0&*&* \end{matrix}\right),\]
and one kind of codimension $2$ stratum, which corresponds to the $\Gamma$-conjugacy class of the parabolic subgroup
\[Q =\left(\begin{matrix} *&*&*\\ 0&*&*\\ 0&0&* \end{matrix}\right).\]
For each $i = 1, 2$, the codimension $1$ strata are the following spaces 
\[B_i = (P_i \cap \SO(3))\backslash P_i/ (P_i\cap \gamma),\] 
which a $2$-torus bundle over $\SO(2)\backslash\SL(2,\R)/(\Gamma \cap\SL(2,\Z))$, and the codimension $2$ strata are of the form 
\[C = (Q\cap \SO(3))\backslash Q/ (Q\cap \Gamma),\] 
which is a compact nil-manifold. 

Label the codimension $1$ strata of $\X$ as $s_i$, for $i \in I$ (an index set). Let $W$ be the right angle Coxeter system with generators $s_i \in S$. The space $U(W,\X)$ is manifold that is union of copies of $\X$ glued to each other along pairs of codimension $1$ boundary strata. Now, let $\Gamma$ be a torsion free, finite index subgroup of $W$. The quotient $U(W,\X)/\Gamma$ is a compact piecewise locally symmetric manifold.
 
\end{itemize}

\subsection{Rigidity of piecewise locally symmetric spaces}   
Let $M$ be a compact piecewise locally symmetric manifold of dimension $n > 2$. The decomposition of $M$ into locally symmetric pieces are rigid under homotopy equivalence in the following sense (\cite{Tamrigidity}, \cite{Tampwrank1}, \cite{Tampwlocsym}).
\begin{theorem}\label{piece rigidity}
Let $M$ be a piecewise locally symmetric manifold of dimension $n > 2$, and let $M_i$, $i\in I$, be the locally symmetric pieces in the decomposition of $M$. Suppose that $M_i$ are irreducible. Let $f \colon M \longrightarrow M$ be a homotopy equivalence. Then restriction of $f$ to each $M_i$ is homotopic to a map $g \colon M_i \longrightarrow M$ that is a diffeomorphism onto a piece $M_j \subset M$. 
\end{theorem}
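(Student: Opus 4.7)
The plan is to translate the geometric rigidity into an algebraic rigidity of the subgroups $\pi_1(M_i) \hookrightarrow \pi_1(M)$, and then upgrade this back to geometric rigidity via Mostow--Prasad--Margulis rigidity on each piece. From the construction $M = U(W,\overline{X})/\Gamma$, the universal cover $\widetilde{M}$ decomposes into pieces indexed by (cosets in) $W$, and correspondingly $\pi_1(M)$ carries a graph-of-groups decomposition whose vertex groups are conjugate to the various $\pi_1(M_i)$ and whose edge groups are (conjugate to) the fundamental groups of the codimension-$1$ strata, i.e., cusp cross-sections in the $\R$-rank $1$ case and parabolic subgroups in the higher-rank case. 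Each $\pi_1(M_i)$ is an irreducible lattice in a semisimple Lie group $G_i$ with no compact factors.

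The first and crucial step is to give an automorphism-invariant characterization of the family $\{\pi_1(M_i)\}$ inside $\pi_1(M)$. The idea is that each $\pi_1(M_i)$ is detectable because it contains a canonical collection of commensurated virtually polycyclic subgroups (arising from the boundary strata of $\overline{X}_i$), while itself containing a nonabelian free subgroup and being maximal with these properties. Any automorphism $f_\ast \in \Aut(\pi_1(M))$ induced by a homotopy equivalence $f : M \to M$ must permute these distinguished conjugacy classes of subgroups, so $f_\ast(\pi_1(M_i))$ is conjugate in $\pi_1(M)$ to $\pi_1(M_j)$ for some index $j$. After composing $f$ with a homotopy from a path realizing this conjugacy, we may arrange that $f$ restricted to $M_i$ sends it into $M_j$ and induces an isomorphism $\pi_1(M_i) \to \pi_1(M_j)$.

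Second, apply rigidity to this isomorphism: Mostow--Prasad rigidity in the irreducible rank-$1$ case (here we need $\dim M_i > 2$ to avoid the hyperbolic surface case), and Margulis super-rigidity together with Prasad's strong rigidity in the higher-rank finite-volume case. In either situation the isomorphism is induced by an isometry, hence by a diffeomorphism $\phi : M_i \to M_j$. Since $M_j$ is aspherical and $f|_{M_i}$ and $\phi$ induce the same map on $\pi_1$, the restriction $f|_{M_i}$ is homotopic to $\phi$, completing the proof.

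The main obstacle is the algebraic identification of the pieces. Producing an intrinsic, automorphism-invariant description of the vertex subgroups $\pi_1(M_i)$ inside $\pi_1(M)$ is delicate when pieces of different ranks (and hence very different commensurator structures) coexist, and when the edge groups include both virtually nilpotent horospherical groups and parabolic groups of semisimple type. This is exactly the content of the rigidity results of \cite{Tamrigidity} and \cite{Tampwrank1}: they show that the peripheral/parabolic markers associated to each piece are detected intrinsically by $\pi_1(M)$ and are preserved by any self-homotopy-equivalence. Once this algebraic recognition is in place, the subsequent appeal to Mostow--Prasad--Margulis and the asphericity of the pieces is standard.
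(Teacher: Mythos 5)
The paper does not actually prove Theorem~\ref{piece rigidity}; it is stated as a citation to \cite{Tamrigidity}, \cite{Tampwrank1}, and \cite{Tampwlocsym}, so there is no in-paper proof to compare against. Your outline is, at the level of strategy, consistent with how one expects those references to argue: pass to the graph-of-groups (or complex-of-groups) decomposition of $\pi_1(M)$ with vertex groups $\pi_1(M_i)$, show a self-homotopy-equivalence permutes the vertex groups up to conjugacy, then upgrade the resulting isomorphisms $\pi_1(M_i)\to\pi_1(M_j)$ to isometries by Mostow--Prasad--Margulis rigidity, and finish by asphericity. That skeleton is sound, and the appeal to $n>2$ to rule out surface pieces is the right remark to make.

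However, your write-up contains a genuine gap, and you acknowledge it yourself: the central step --- producing an intrinsic, $\Aut(\pi_1(M))$-invariant characterization of the family of vertex subgroups --- is not actually carried out. You sketch a heuristic (``contains commensurated virtually polycyclic subgroups arising from boundary strata, contains a nonabelian free subgroup, maximal with these properties'') and then write ``this is exactly the content of the rigidity results of \cite{Tamrigidity} and \cite{Tampwrank1}.'' That makes the proposal circular as a proof of the statement: you are deferring the hard part to the very results being established. Moreover the heuristic as stated needs repair: in the higher-rank case the codimension-one strata groups (e.g.\ $\pi_1(B_i)$ in the $\SL(3,\Z)$ example of Section~\ref{sec:definitions}) are themselves lattices in parabolic subgroups containing semisimple factors and are not virtually polycyclic --- only the lowest-dimensional corner stratum groups are virtually nilpotent --- so ``commensurated virtually polycyclic subgroups'' does not correctly describe the edge groups across ranks, and the claimed ``maximality'' characterization would have to be argued, not asserted. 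One simplification available in the present setting that your discussion of ``pieces of different ranks coexisting'' overlooks: in the construction $M = U(W,\overline{X})/\Gamma$ every chamber is a copy of the same $\overline{X}$, so all pieces share a single symmetric model and a single rank; the mixed-rank difficulty you flag does not arise here. Even granting that, the recognition of the vertex subgroups is the theorem, and your proposal leaves it unproved.
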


\section{Piecewise locally symmetric manifolds have discrete amount of symmetry}

\begin{theorem}\label{discrete Isom}
Let $M$ be a piecewise locally symmetric manifold of dimension $n \geq 3$ and let $h$ be a metric on $M$. Let $\widetilde{h}$ be the lifted metric of $h$ on the universal cover $\widetilde{M}$ of $M$. Then $\Isom(\widetilde{M},\widetilde{h})$ is discrete. 
\end{theorem}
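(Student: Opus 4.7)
The plan is to argue by contradiction. Assume $\Isom(\widetilde{M}, \widetilde{h})$ is not discrete. By Myers--Steenrod it is a Lie group, so its identity component $G := \Isom(\widetilde{M}, \widetilde{h})^{0}$ is a connected Lie group of positive dimension. The deck group $\pi_1(M)$ acts properly discontinuously on $\widetilde{M}$, hence sits inside $\Isom(\widetilde{M},\widetilde{h})$ as a discrete subgroup. For any $\gamma \in \pi_1(M)$ the continuous map $G \to \pi_1(M)$ given by $g \mapsto g\gamma g^{-1}$ must be constant (target discrete, source connected) with value $\gamma$, so $G$ centralizes $\pi_1(M)$. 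Consequently every $g \in G$ descends to an isometry of $(M,h)$, giving a continuous homomorphism $\Phi : G \to \Isom(M,h)$ whose kernel is contained in the discrete group $\pi_1(M)$. Since $M$ is closed, $\Isom(M,h)$ is a compact Lie group, and $\Phi(G)$ is a positive-dimensional connected subgroup; it therefore contains a circle $T \cong S^{1}$ acting by nontrivial isometries on $M$.

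Next I would leverage Theorem \ref{piece rigidity} (applicable since $n \geq 3$). Each element $\varphi \in T$ is a self-diffeomorphism, hence a homotopy equivalence, of $M$, so $\varphi|_{M_i}$ is homotopic to a diffeomorphism onto some piece $M_{\sigma(\varphi)(i)}$. The assignment $\varphi \mapsto \sigma(\varphi)$ is a continuous homomorphism from the connected group $T$ into the finite symmetric group on the set of pieces, and is therefore constantly the identity; so the $T$-action preserves each piece $M_{i}$ up to homotopy. Combined with the classical theorem of Conner and Raymond---a nontrivial $S^{1}$-action by homeomorphisms on a closed aspherical manifold $M$ injects a copy of $\mathbb{Z}$ into the center of $\pi_1(M)$---this forces $Z(\pi_1(M))$ to be infinite.

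To reach a contradiction, it remains to show that $\pi_1(M)$ has trivial center. By construction $M = U(W,\overline{X})/\Gamma$, and $\pi_1(M)$ admits a natural graph-of-groups decomposition whose vertex groups are (conjugates of) the fundamental groups of the pieces---irreducible lattices in semisimple Lie groups of noncompact type, hence centerless---and whose edge groups are fundamental groups of the codimension-one strata. Standard Bass--Serre theory then yields that the whole graph of groups has trivial center, provided the edge groups embed as proper subgroups of the adjacent vertex groups, which follows from the definition of the Coxeter structure on $W$.

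I expect this last step to be the main obstacle. In rank-$1$ cases (Example 1) centerlessness is immediate from the amalgamated product structure of the double, but in higher rank the Borel--Serre boundary strata are nilmanifold bundles and one must rule out central elements arising from the nilpotent fibers. A cleaner alternative that sidesteps the center calculation would combine piece rigidity with Mostow--Prasad--Margulis rigidity: the $T$-action, restricted to each piece $M_i$, determines a continuous map from $T$ into the \emph{finite} group $\Out(\pi_1(M_i))$, hence is homotopically trivial on each piece; one then propagates this triviality across the gluing loci (the codimension-one strata), using that an isometry fixing each piece up to homotopy and each boundary stratum up to homotopy must itself be homotopic to the identity, contradicting the positive dimension of $\Phi(G)$.
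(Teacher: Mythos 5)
There is a genuine gap at the very first step, and it is fatal to the whole strategy. You claim that for $\gamma \in \pi_1(M)$ the map $g \mapsto g\gamma g^{-1}$ is a continuous map from the identity component $G = \Isom(\widetilde{M},\widetilde{h})^0$ \emph{to} $\pi_1(M)$, hence constant, so that $G$ centralizes $\pi_1(M)$ and descends to a positive-dimensional (hence circle-containing) subgroup of $\Isom(M,h)$. But $\pi_1(M)$ is not normal in $\Isom(\widetilde{M},\widetilde{h})$, and $g\gamma g^{-1}$ has no reason to lie in $\pi_1(M)$ for $g \neq e$; the map lands only in $\Isom(\widetilde{M},\widetilde{h})$, which is not discrete by assumption, so no constancy follows. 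Indeed your argument, if valid, would show that \emph{any} closed Riemannian manifold whose universal cover has non-discrete isometry group carries a nontrivial circle action and has infinite center in $\pi_1$ -- this is false already for closed hyperbolic manifolds, where $\Isom(\widetilde{M})$ is a positive-dimensional Lie group, yet $\pi_1(M)$ is centerless and no isometry of $\widetilde{M}$ outside (a finite extension of) $\pi_1(M)$ descends to $M$. Since the circle action is never established, the appeal to Conner--Raymond and the entire center discussion that follows (which is in any case left incomplete in the higher-rank case) do not get off the ground; the alternative sketch in your last paragraph also presupposes that the $T$-action on $M$ exists.

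The difficulty you tried to dispatch in two lines -- extracting consequences on $M$ itself from ``hidden'' symmetries of $\widetilde{M}$ -- is precisely the hard content that the paper outsources to Farb--Weinberger's theorem: after checking that $M$ is aspherical, smoothly irreducible, and that $\pi_1(M)$ has no nontrivial normal abelian subgroup (this is where the $\CAT(0)$ complex-of-groups structure of $\widetilde{M}$ and the Margulis normal subgroup theorem are used), non-discreteness of $\Isom(\widetilde{M},\widetilde{h})$ forces $(M,h)$ to be isometric to an irreducible higher-rank locally symmetric manifold. The contradiction is then obtained inside $\pi_1(M)$: the folding map of $M$ onto a fundamental chamber $X$ gives a surjection $\pi_1(M) \to \pi_1(X)$ with infinite kernel of infinite index, violating the Margulis normal subgroup theorem. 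If you want to avoid quoting Farb--Weinberger, you would need a genuine substitute for their analysis of non-discrete isometry groups of covers of aspherical manifolds; your rigidity-based observations about permuting the pieces could only enter after such a substitute is in place.
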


\begin{proof}
The two key ideas of the proof are the following. Firstly, we prove that if $\Isom(\widetilde{M}, \widetilde{h})$ is not discrete, then by \cite[Theorem 1.3]{FW2}, the manifold $(M, h)$ is isometric an irreducible, locally symmetric Riemannian manifold of nonpositive sectional curvature of $\R$ rank greater than $1$. Secondly, we prove that $\pi_1(M)$ has an infinite index, infinite, normal subgroup and obtain a contradiction to the Margulis normal subgroup theorem.
  
Suppose that $\Isom(\widetilde{M},\widetilde{h})$ is not discrete. We check that $M$ satisfies the conditions in the first statement of \cite[Theorem 1.3]{FW2}. By \cite{Tampwlocsym}, the manifold $M$ is aspherical. We need to show that $M$ is smoothly irreducible and does not have any normal, infinite abelian subgroup.

In order to prove the above, we need to use the fact the $\pi_1(M)$ is the fundamental group of a nonpositively curved complex of groups since $M$ is a complex $T$ of spaces. This is established in \cite{Tampwlocsym}. What we needs in the following. The vertex groups are $\pi_1(M_i)$'s, the edge groups are the fundamental group of the codimension $1$ strata, etc. The universal cover $\widetilde{M}$ is a simply connected, $\CAT(0)$ complex $\widetilde{T}$ of spaces. The group $\pi_1(M)$ acts on $\widetilde{T}$ by isometries with quotient $T$. The stabilizer of each cell in $\widetilde{T}$ is a conjugate of the corresponding cell group.  

\begin{itemize}

\item \textit{Showing that $M$ is smoothly irreducible:}

\textbf{Case 1:} The $\R$ rank of $M_i$'s is $1$. If $M$ is finitely covered by $\widehat{M}$ that is a product, then the centralizer of each element in $\pi_1(\widehat{M})$ contains a $\Z^2$. By \cite[Lemma 5]{Tamrigidity} any subgroup of $\pi_1(\widehat{M})$ that is isomorphic to $\Z^2$ is conjugate to a subgroup in a cusp subgroup of $\widehat{M}$. But this implies that any element in $\pi_1(\widehat{M})$ belongs to a conjugate of a cusp subgroup, which is clearly not true. Hence, $M$ is smoothly irreducible (i.e. not finitely covered by a product).

\textbf{Case 2:} The $\R$-rank of $M_i$'s is $2$. If $M$ is finitely covered by $\widehat{M}$ that is a product $N_1\times N_2$. Let $\widehat{M}_a$ be a piece in the decomposition of $\widehat{M}$. The image of $\pi_1(\widehat{M}_a)$ under the projection $p \colon \pi_1(\widehat{M}) = \pi_1(N_1) \times \pi_1(N_2) \longrightarrow \pi_1(N_i)$ either is finite or has finite kernel since $\widehat{M}_a$ has higher rank, so the Margulis normal subgroup theorem applies $\widehat{M}_a$. 

If $p(\pi_1(\widehat{M}_a))$ is finite, then it is trivial since $\pi_1(N_i)$ is torsion free as $\pi_1(\widehat{M})$ is torsion free. Hence, $\pi_1(\widehat{M}_a)$ is a subgroup of $\pi_1(N_1)$ or $\pi_1(N_2)$. Without loss of generality, suppose that $\pi_1(\widehat{M}_a$ is a subgroup of $\pi_1(N_1)$. Then $\pi_1(N_2)$ commutes with $\pi_1(\widehat{M}_a)$, and thus, preserves the Fix set of $\pi_1(\widehat{M}_a)$ in the complex $\widetilde{T}$. The fixed set of $\pi_1(\widehat{M}_a)$ is a point since $\pi_1(\widehat{M}_a)$ is a vertex group. It follows that $\pi_1(N_2)$ has to fix the same point, so $\pi_1(N_2) \leq \pi_1(\widehat{M}_a)$. But the projection of $\pi_1(\widehat{M}_a)$ onto $\pi_1(N_2)$ is trivial, so $\pi_1(N_2)$ is trivial. Since $N_2$ is a compact, simply connected, aspherical manifold, it has to a point. Hence, $\widehat{M}$ is not a nontrivial product. Therefore, $M$ is smoothly irreducible.
\newline
\item \textit{Showing that $\pi_1(M)$ does not contain a nontrivial normal abelian subgroup $A$:}

Suppose it does. Being normal and nontrivial, the group $A$ is not conjugate to a subgroup of any vertex group of $\pi_1(M)$ for the following reasons. The group $A$ is infinite since it is nontrivial, abelian and torsion free. If $A$ is conjugate to a subgroup of a vertex group of $\pi_1(M)$, say, $\pi_1(M_a)$, then $\pi_1(M_a)$ has an infinite, normal subgroup, which must have finite index in $\pi_1(M_a)$ by the normal subgroup theorem. Hence, $\pi_1(M_a)$ is virtually abelian, which contradicts the fact that $M_a$ is a higher rank lattice.

Hence, $A$ does not fix a point in the complex $\widetilde{T}$ of $\pi_1(M)$.  By \cite[Proposition 2.3]{Farb}, there is an $A$-invariant flat $F$ in $\widetilde{T}$ on which $A$ acts by translations. Since $A$ is normal, $gag^{-1} \in A$ for all $g \in \pi_1(M)$. But $gag^{-1}$ is a translation along the flat $g(F)$, so it is not a element in $A$ for general $g$. Hence, $\pi_1(M)$ does not contain a nontrivial finitely generated normal abelian subgroup. 
\end{itemize}

We have shown that $(M, h)$ satisfies the conditions in the first statement of \cite[Theorem 1.3]{FW2}. Thus, $M$ is isometric to an irreducible, locally symmetric Riemannian manifold of nonpositive sectional curvature. Therefore, $M = \Gamma\setminus G/K$, where $G$ is a semisimple Lie group of noncompact type, $K$ is a maximal compact subgroup of $G$ and $\Gamma$ is an irreducible, cocompact lattice that is isomorphic to $\pi_1(M)$. Since $\pi_1(M)$ contains a subgroup that is isomorphic to $\Z^2$, it follows that $M$ must have higher $\R$ rank. Therefore, $\pi_1(M)$ is almost simple by the normal subgroup theorem. 

Let $X$ be a fundamental chamber of $M$. The inclusion of $X$ into $M$ is a retraction. There is a folding map $f \colon M \longrightarrow X$ that restricts to the identity on $X$. Hence the induced map $f_*$ on fundamental groups is surjective. Thus $H := \Ker f_*$ is an infinite index subgroup of $\pi_1(M)$. We claim that $H$ is infinite and thus contradict the Margulis normal subgroup theorem. This is easily seen by looking at the obvious elements that get sent to $1$ by $f_*$, i.e. the concatenation of a (hyperbolic type) loop in one copy of $X$ with its inverse in an adjacent copy of $X$ in $M$. This is element is a non-identity element, and thus, has infinite order since $\pi_1(M)$ is torsion free.  
\end{proof}

\section{Piecewise locally symmetric manifolds have a bounded amount of symmetry}
We will prove Theorem \ref{magic number} in this section. Firstly, we prove the following lemma which we will need in the proof of Theorem \ref{magic number}.
\begin{lemma}\label{most symmetry}
Let $M$ be a piecewise locally symmetric manifold. Then for any Riemannian metric $h$ on $M$, the group $\Isom(M,h)$ is isomorphic to a subgroup of $\Out(\pi_1(M))$.
\end{lemma}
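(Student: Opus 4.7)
The plan is to construct the natural homomorphism $\Psi \colon \Isom(M,h) \to \Out(\pi_1(M))$ and then show it is injective. To define $\Psi$, I fix a basepoint $x_0 \in M$; given $f \in \Isom(M,h)$ and any path from $f(x_0)$ to $x_0$, this path transports $f_*$ to an automorphism of $\pi_1(M,x_0)$ whose outer class is independent of the chosen path. The resulting $\Psi$ is manifestly a group homomorphism.

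For injectivity, suppose $\Psi(f)=1$, so $f_*$ is inner. Since $M$ is aspherical \cite{Tampwlocsym}, $f$ is homotopic to the identity, and one may choose a lift $\tilde f \in \Isom(\widetilde M,\tilde h)$ that commutes with the deck action of $\pi_1(M)$. By Theorem~\ref{discrete Isom}, $\Isom(\widetilde M,\tilde h)$ is discrete, so the identity component of the compact Lie group $\Isom(M,h)$ (which would lift to a connected subgroup of the discrete $\Isom(\widetilde M,\tilde h)$) must be trivial, making $\Isom(M,h)$ finite. Hence $f$ has finite order $k$, and $\tilde f^k$ is a commuting lift of the identity, so it lies in $\pi_1(M)\cap C_{\Isom(\widetilde M)}(\pi_1(M))=Z(\pi_1(M))$. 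The proof of Theorem~\ref{discrete Isom} established that $\pi_1(M)$ has no nontrivial normal abelian subgroup, so $Z(\pi_1(M))=1$, forcing $\tilde f^k=1$ and $\tilde f$ to be of finite order.

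To conclude $\tilde f=1$, I use the piecewise structure via Theorem~\ref{piece rigidity}. Since $f\simeq\id$, the theorem says that on each locally symmetric piece $M_i$ the restriction $f|_{M_i}$ is homotopic to a diffeomorphism onto a piece $M_j$; the homotopy of $f|_{M_i}$ to the identity inclusion forces $M_j=M_i$, so $f$ preserves every piece setwise. The fundamental group of each piece is an irreducible (higher-rank or rank-one) lattice, hence centerless, and $f|_{M_i}$ is a self-diffeomorphism of a compact aspherical manifold homotopic to the identity; the standard rigidity fact for aspherical manifolds with centerless fundamental group then gives $f|_{M_i}=\id$. Since the union of the pieces is dense in $M$ and $f$ is continuous, $f=\id$ on all of $M$, proving injectivity of $\Psi$.

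The step I expect to be the main obstacle is the last reduction to pieces: formally one needs the per-piece injectivity statement, which must be argued without circularity. The cleanest route is to combine the fact that a compact connected Lie group cannot act effectively on a compact aspherical manifold with centerless fundamental group (so $\Isom(M_i,h|_{M_i})$ is finite) with a Smith-theoretic analysis showing that a nontrivial finite-order lift commuting with $\pi_1(M_i)$ cannot exist, applied directly to each Borel--Serre piece $M_i$ rather than by invoking the lemma itself.
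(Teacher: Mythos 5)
Your setup matches the paper through the reduction to finite order: the canonical homomorphism $\Psi$, the discreteness of $\Isom(\widetilde M,\widetilde h)$ forcing $\Isom(M,h)$ finite, the choice of a commuting lift $\widetilde f$ when $f_*$ is inner, and the deduction that $\widetilde f^k \in Z(\pi_1(M))=1$ so $\widetilde f$ has finite order — all of this is fine. Where the argument breaks is the per-piece reduction. Theorem~\ref{piece rigidity} tells you only that $f|_{M_i}$ is \emph{homotopic} to a diffeomorphism onto $M_i$; since $h$ is an arbitrary metric, there is no reason for the isometry $f$ to carry the submanifold $M_i$ onto itself. Consequently ``$f$ preserves every piece setwise'' is not justified, and $f|_{M_i}$ is not a self-map of $M_i$, so the per-piece rigidity statement cannot even be formulated the way you want. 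Moreover, the ``standard rigidity fact'' you invoke on each piece (a finite-order self-map of a compact aspherical manifold with centerless $\pi_1$ homotopic to the identity is the identity) is precisely the Conner--Raymond theorem — exactly the content of the lemma you are proving — and on a compact manifold-with-corners rather than a closed manifold, so this step is both circular in spirit and not literally applicable.

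The fix is to drop the piecewise reduction entirely: apply the Conner--Raymond/Borel theorem (equivalently, the Smith-theoretic argument you sketch at the end, but run on the whole of $\widetilde M$) directly to the closed aspherical manifold $M$, using that $Z(\pi_1(M))=1$ (which follows from the no-nontrivial-normal-abelian-subgroup part of the proof of Theorem~\ref{discrete Isom}). That is exactly what the paper does: it cites Conner--Raymond as a black box, so its proof of the lemma is just (i) $\Isom(M,h)$ is finite by Theorem~\ref{discrete Isom}, (ii) $\pi_1(M)$ is centerless, (iii) invoke the theorem. Your Smith-theory idea in the last paragraph is the correct underlying mechanism and does work once applied globally, but as written the proposal has a genuine gap at the claim that $f$ preserves pieces.
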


\begin{proof}
By Theorem \ref{discrete Isom}, the group $\Isom(\widetilde{M}, h)$ is discrete, which implies that $\Isom(M, h)$ is discrete. Let $\phi \colon \Isom(M, h) \longrightarrow \Out(\pi_1(M))$ be the canonical homomorphism. We need to prove that $\phi$ is injective. But this follows from a  theorem in \cite{CR} which says the following.
\begin{theorem}[Borel, Conner, Raymond]
Let $M$ be a closed connected aspherical manifold with centerless fundamental group. If $G$ is a finite group that act effectively on $M$, then the canonical homomorphism $\psi \colon G \longrightarrow \Out(\pi_1(M))$ is a monomorphism.
\end{theorem}
Since, $M$ is closed connected and aspherical we just need to check that $\pi_1(M)$ has trivial center. But this follows from the proof of Theorem \ref{discrete Isom}.
\end{proof}
Now we prove Theorem \ref{magic number}.
\begin{proof}[Proof of Theorem \ref{magic number}]
The proof is similar to that in \cite{FW2}. Let $X$ be the $\widetilde{M}$ with the lifted metric $h$ from $M$. By theorem \ref{discrete Isom}, the group $\Isom(X)$ is discrete. So $X/\Isom(X)$ is a compact orbifold covered by $M$. The degree of the cover is $d = [\Isom(\widetilde{M}): \pi_1(M)]$. 

Let $(\widehat{M}, h)$ be a finite sheeted cover of $(M, h)$ that is a nontrivial normal cover of $X/\Isom(X)$. Such cover of $M$ exists for group-theoretic reasons. Let $e$ be the degree of the cover $\widehat{M} \longrightarrow M$. 
Let $F = \Isom(X)/\pi_1(\widehat{M})$ be the group of deck transformations of $(\widehat{M}, h)$ as a covering space of $X/\Isom(X)$. 

Let $\mathcal{M}$ (respectively, $\widehat{\mathcal{M}}$) be the disjoint union of all the complete locally symmetric spaces corresponding to the pieces $Y_i$ in the cusp decomposition of $M$ (respectively, $\widehat{\mathcal{M}}$), i.e. they are the spaces before we delete the ends of  their cusps.  

By Lemma \ref{most symmetry}, the group $F$ is isomorphic to a subgroup $H$ of $\Out(\pi_1(\widehat{M}))$ since $F$ acts effectively on $(\widehat{M}, h)$. Then the action of each element of $H$ is realized by isometry of $\widehat{\mathcal{M}}$ with respect to the locally symmetric metric $g_{loc}$. Since $F$ is a finite group, $H$ cannot contain nontrivial \emph{twists} (see \cite{Tampwlocsym} for the definition of twists). Hence, the action of each element of $H$ on $\widehat{\mathcal{M}}$ is effective. 

We claim that if any element of $F$ preserves a component of $\mathcal{M}$ it acts effectively on that component. Suppose this is not true, i.e. there is $f\in F$ such that the restriction $f$ to a piece $M_i$ is the identity map. Since $f$ has finite order, the lift $\widetilde{f}$ of $f$ restricted to $\widetilde{M}_i$ has finite order. By Lemma \ref{finite order isom} below, this is not possible and the claim follows.

Let $\mathcal{W}$ be the quotient of $\widehat{\mathcal{M}}$ by $F$. 
Now, $|H| = |F| = de$. So
\[ d = \dfrac{|H|}{e}  = \dfrac{\Vol(\widehat{\mathcal{M}}, g_{loc})}{e\Vol(\mathcal{W}, g_{loc})} = \dfrac{e\Vol(\mathcal{M}, g_{loc})}{e\Vol(\mathcal{W}, g_{loc})} = \dfrac{\Vol(\mathcal{M}, g_{loc})}{\Vol(\mathcal{W}, g_{loc})}. \]

Let $V_n > 0$ be a lower bound of the volume of $n$--dimensional orbifolds. Such a $V_n$ exists by a theorem of Kazhdan and Margulis (see \cite[Corollary XI.11.9]{Raghunathan}). Hence, 
\[d \leq \dfrac{\Vol(\mathcal{M},g_{loc})}{V_n} .\] 
So we can let the constant in the theorem to be $C =  \dfrac{\Vol(\mathcal{M},g_{loc})}{V_n}$.  
\end{proof}
\begin{figure}
\begin{center}
\includegraphics[height=80mm]{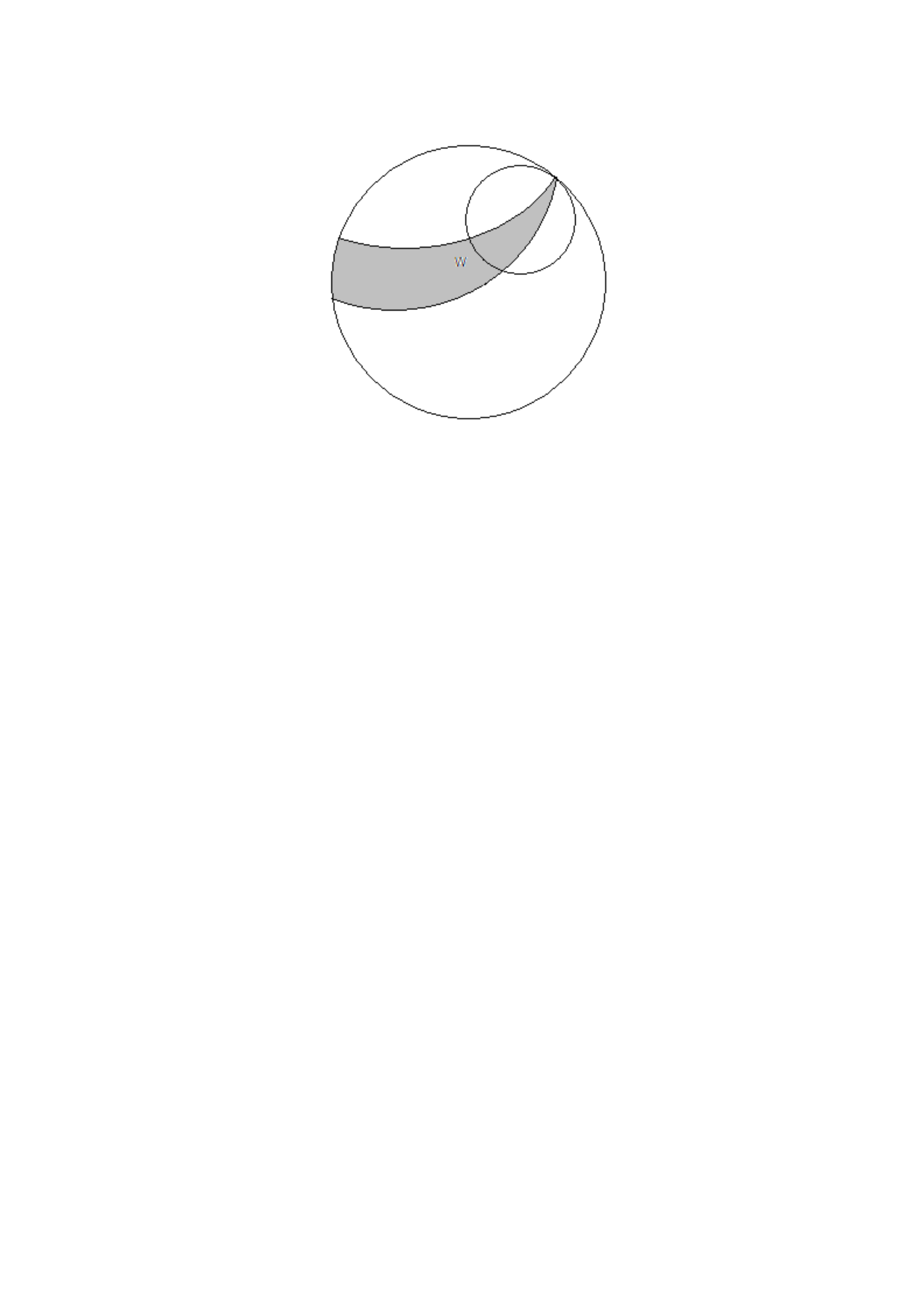}
\caption{The Fix set of $\widetilde{f}$.}
\end{center}
\end{figure}
\begin{lemma}\label{finite order isom}
Let $X$ be a noncompact locally symmetric space with finite volume. Let $f \colon X \longrightarrow X$ be a finite order isometry and $\widetilde{f} \colon \widetilde{X} \longrightarrow \widetilde{X}$ be a lift of $f$. Suppose that $f$ preserves a lowest-dimensional stratum $C$ of $\X$. If the restriction of $f$ to $C$ is homotopic to the identity map, then $\tilde{f}$ has infinite order.
\end{lemma}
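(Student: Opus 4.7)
The plan is to argue by contradiction: assume $\widetilde{f}$ has finite order, use the homotopy hypothesis to force $\widetilde{f}$ to act trivially on a lift of $C$, then invoke rigidity of isometries of the symmetric space to conclude $\widetilde{f} = \id$, contradicting the nontriviality of $f$ that is implicit in the setup.

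First, since $f$ preserves $C$, choose a lift $\widetilde{C}$ of $C$ in the boundary of $\widetilde{X}$ (a horosphere in the $\R$-rank one case, a minimal Borel--Serre face in higher rank) preserved by $\widetilde{f}$. Let $\Gamma_P \le \pi_1(X)$ be the stabilizer of $\widetilde{C}$; it acts cocompactly on $\widetilde{C}$ with quotient $C$, and $\widetilde{f}$ normalizes $\Gamma_P$. The hypothesis that $f|_C$ is homotopic to the identity says the induced outer automorphism of $\pi_1(C) \cong \Gamma_P$ is trivial, so the conjugation action of $\widetilde{f}$ on $\Gamma_P$ agrees with conjugation by some $\gamma_0 \in \Gamma_P$. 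Replacing $\widetilde{f}$ by $\gamma_0^{-1}\widetilde{f}$, still a lift of $f$, I may assume $\widetilde{f}$ centralizes $\Gamma_P$.

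Next, identify $\widetilde{f}|_{\widetilde{C}}$. In the $\R$-rank one case, $\widetilde{C}$ carries its induced Euclidean metric, $\Gamma_P$ is a crystallographic group, and any Euclidean isometry centralizing $\Gamma_P$ must have trivial linear part (it has to fix each element of the full-rank translation sublattice pointwise), so it is a pure translation. Since $\widetilde{f}|_{\widetilde{C}}$ has finite order, this translation must be trivial and $\widetilde{f}|_{\widetilde{C}} = \id$. In higher rank a parallel argument via the Langlands decomposition $P = MAN$ of the parabolic stabilizing $\widetilde{C}$ places the centralizer of $\Gamma_P$ inside the unipotent radical $N$, whose only finite-order element is the identity, again giving $\widetilde{f}|_{\widetilde{C}} = \id$.

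Finally, an isometry of $\widetilde{X}$ that fixes $\widetilde{C}$ pointwise must be the identity of $\widetilde{X}$, since the fixed set of a nontrivial isometry is a proper totally geodesic submanifold, but $\widetilde{C}$ (a horosphere, or more generally a Borel--Serre face) is not contained in any proper totally geodesic submanifold of $\widetilde{X}$; the accompanying figure illustrates the horospherical picture. Hence $\widetilde{f} = \id$ on $\widetilde{X}$, which makes the original lift $\gamma_0 \in \pi_1(X)$, and projecting yields $f = \id$, contradicting the assumption that $f$ is nontrivial. The main obstacle is the higher-rank portion of the middle step: verifying that the centralizer of $\Gamma_P$ in $P$ is contained in $N$, and that no proper totally geodesic submanifold of $\widetilde{X}$ contains the Borel--Serre face $\widetilde{C}$, both require a careful analysis of the parabolic structure of $G$ and of the Lie-theoretic position of $\widetilde{C}$ inside $\widetilde{X}$.
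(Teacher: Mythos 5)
Your route is genuinely different from the paper's, but as written it has a logical gap at its crux: the reduction ``replace $\widetilde{f}$ by $\gamma_0^{-1}\widetilde{f}$ so that it centralizes $\Gamma_P$'' destroys the only hypothesis you have, namely finite order. The modified lift differs from the original one by the deck transformation $\gamma_0$, and in general it has infinite order; so when you next argue ``the centralizing map is a pure translation, and since $\widetilde{f}|_{\widetilde{C}}$ has finite order this translation is trivial,'' you are applying two properties (centralizing $\Gamma_P$, finite order) that belong to two different maps and need never hold simultaneously for one map. A concrete instance inside your own model: take $\Gamma_P=\Z^{n-1}$ acting by translations, $\widetilde{f}|_{\widetilde{C}}=\id$ and $\gamma_0$ a nontrivial lattice translation; then $\gamma_0^{-1}\widetilde{f}$ centralizes $\Gamma_P$ but is a nontrivial translation of infinite order, so the step ``finite order $\Rightarrow$ the translation is trivial'' fails. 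Relatedly, your terminal contradiction is with ``$f$ is nontrivial,'' which is not a hypothesis of the lemma and is actually false in the way the lemma is used in the paper (there $f$ restricted to the piece \emph{is} the identity); the contradiction has to come from the lift, e.g.\ from the fact that a nontrivial deck transformation is torsion-free, so deducing ``the original lift equals $\gamma_0\in\pi_1(X)$'' should be played against finite order of that lift rather than against nontriviality of $f$.

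There are also two structural problems you partially flag but that are real. In $\R$-rank one the cusp cross-sections are infranilmanifolds (Heisenberg-type horospheres for complex, quaternionic and Cayley hyperbolic spaces), so ``$\widetilde{C}$ with its Euclidean metric, $\Gamma_P$ crystallographic, trivial linear part'' only covers the real hyperbolic case. In higher rank, $\widetilde{C}$ is a Borel--Serre face, which is not a submanifold of the symmetric space $\widetilde{X}$ at all; hence both the claim that the centralizer of $\Gamma_P$ lands in the unipotent radical and, more seriously, the final step (``the fixed set of an isometry is a proper totally geodesic submanifold, and no such submanifold contains $\widetilde{C}$'') do not apply as stated. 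The paper sidesteps all of this parabolic bookkeeping with a soft argument: assuming $\widetilde{f}$ has finite order, $\Fix(\widetilde{f})$ is nonempty (Cartan fixed-point theorem) and convex, hence contractible, and has a product structure $\R^k\times W$ with $W=\Fix(\widetilde{f})\cap\widetilde{C}$ contractible of dimension at most $n-k-1$; since $f|_C\simeq\id$ forces $\widetilde{f}$ to commute with $\pi_1(C)$, the group $\pi_1(C)$ acts freely and properly discontinuously on $W$, contradicting its cohomological dimension $n-k$. If you want to keep your algebraic route, you must carry the original finite-order lift throughout (working with the coset $\gamma_0\cdot C_{\Isom(\widetilde{C})}(\Gamma_P)$ rather than the centralizer itself), treat the nilpotent rank-one cusps, and replace the totally-geodesic argument by one that makes sense for Borel--Serre faces.
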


\begin{proof}
Since $f$ is homotopic to the identity map on $C$, the lifted map $\widetilde{f}$ must be in the center of $\pi_1(C)$. 
Suppose that $\widetilde{f}$ has finite order. Since $\widetilde{M}$ is contractible, the set $\Fix(\widetilde{f})$ is nonempty. Since $\widetilde{f}$ preserves $\widetilde{C}$, a lift of $C$ that contains the basepoint, and $\widetilde{C}$ is contractible, it follows that the intersection $W = \Fix(\widetilde{f}) \cap \widetilde{C}$ is nonempty. 

We claim that $W$ is a contractible complete submanifold of $\widetilde{C}$. This is because $\Fix(\widetilde{f})$ is diffeomorphic to $\R^k\times W$, for $k$ the codimension of $C$ in $X$. Now, $\Fix(\widetilde{f})$ is contractible since it is convex (complete) submanifold of $\widetilde{M}$. Thus, $W$ must be contractible. Since $f$ preserves $C$ and is homotopic to the identity map on $C$, we have
\[ \widetilde{f}\circ\gamma = \gamma\circ\widetilde{f},\]
for all $\gamma \in \pi_1(C)$. This means that $\pi_1(C)$ preserves the Fix set of $\widetilde{f}$ in $\widetilde{C}$, which is $W$. So $\pi_1(C)$ acts freely, properly discontinuously on a contractible manifold of dimension at most $n-k-1$. This contradicts the cohomological dimension of $\pi_1(C)$, which is $n-k$.
\end{proof}


\bibliographystyle{amsplain}
\bibliography{bibliography}

\providecommand{\bysame}{\leavevmode\hbox to3em{\hrulefill}\thinspace}
\providecommand{\MR}{\relax\ifhmode\unskip\space\fi MR }
\providecommand{\MRhref}[2]{%
  \href{http://www.ams.org/mathscinet-getitem?mr=#1}{#2}
}
\providecommand{\href}[2]{#2}
\begin{thebibliography}{10}

\bibitem{Avramidistucktori}
G.~Avramidi, \emph{Periodic flats and group actions on locally symmetric spaces
  (preprint)}.

\bibitem{AvramidiL2}
\bysame, \emph{Smith theory, $l^2$ cohomology, isometries of locally symmetric
  manifolds and moduli spaces of curves (preprint)}.

\bibitem{CR}
P.~E. Conner and Frank Raymond, \emph{Manifolds with few periodic
  homeomorphisms}, Proceedings of the {S}econd {C}onference on {C}ompact
  {T}ransformation {G}roups ({U}niv. {M}assachusetts, {A}mherst, {M}ass.,
  1971), {P}art {II} (Berlin), Springer, 1972, pp.~1--75. Lecture Notes in
  Math., Vol. 299.

\bibitem{Davis}
Michael~W. Davis, \emph{The geometry and topology of {C}oxeter groups}, London
  Mathematical Society Monographs Series, vol.~32, Princeton University Press,
  Princeton, NJ, 2008.

\bibitem{FW2}
B.~Farb and S.~Weinberger, \emph{Isometries, rigidity and universal covers},
  Ann. of Math. (2) \textbf{168} (2008), no.~3, 915--940.

\bibitem{Farb}
Benson Farb, \emph{Group actions and {H}elly's theorem}, Adv. Math.
  \textbf{222} (2009), no.~5, 1574--1588.

\bibitem{Ji}
Lizhen Ji, \emph{Compactifications of symmetric spaces and locally symmetric
  spaces}, Geometric complex analysis ({H}ayama, 1995), World Sci. Publ., River
  Edge, NJ, 1996, pp.~297--308.

\bibitem{Tampwlocsym}
T.~T. Nguyen~Phan, \emph{Gluing locally symmetric manifolds, asphericity and
  rigidity (in preparation)}.

\bibitem{Tamrigidity}
\bysame, \emph{Smooth (non)rigidity of cusp-decomposable manifolds (to appear
  in comment. math. helv.)}.

\bibitem{Tampwrank1}
\bysame, \emph{{Smooth (non)rigidity of piecewise rank one manifolds
  (preprint)}}.

\bibitem{Raghunathan}
M.~S. Raghunathan, \emph{Discrete subgroups of {L}ie groups}, Springer-Verlag,
  New York, 1972, Ergebnisse der Mathematik und ihrer Grenzgebiete, Band 68.

\bibitem{Zimmer}
Robert~J. Zimmer, \emph{Ergodic theory and semisimple groups}, Monographs in
  Mathematics, vol.~81, Birkh\"auser Verlag, Basel, 1984.

\end{thebibliography}


\end{document}